\setlist[enumerate]{label=\normalfont{(\arabic*)}}
\newtheorem{proposition}{Proposition}
\newtheorem{lemma}{Lemma}
\newtheorem{theorem}{Theorem}
\newtheorem{corollary}{Corollary}
\theoremstyle{definition}
\theoremstyle{remark}
\newtheorem {remark}{Remark}
\DeclareMathOperator{\Spec}{Spec}
\DeclareMathOperator{\Aut}{Aut}
\DeclareMathOperator{\SAut}{SAut}
\DeclareMathOperator{\Supp}{Supp}
\DeclareMathOperator{\SL}{SL}
\DeclareMathOperator{\reg}{reg}
\DeclareMathOperator{\Xreg}{X^{\reg}}
\DeclareMathOperator{\Cl}{Cl}
\DeclareMathOperator{\Susp}{Susp}
\DeclareMathOperator{\divv}{div}
\DeclareMathOperator{\red}{red}
\DeclareMathOperator{\ssss}{ss}
\DeclareMathOperator{\Ker}{Ker}
\DeclareMathOperator{\Frac}{Frac}
\DeclareMathOperator{\WDiv}{WDiv}
\renewcommand{\Im}{\mathop{\text{Im}}}
\DeclareMathOperator{\fdiv}{div}
\renewcommand{\div}{\fdiv}
\DeclareMathOperator{\mIm}{Im}
\renewcommand{\Im}{\mIm}
\newcommand{\bb}{\mathbb}
\renewcommand{\phi}{\varphi}
\def\GG{{\mathbb G}}
\def\CC{{\mathbb C}}
\def\KK{{\mathbb K}}
\def\ZZ{{\mathbb Z}}
\def\AA{{\mathbb A}}
\begin{document}


\date{}
\title{On flexibility of affine factorial varieties}
\author{Ivan Arzhantsev}
\address{Faculty of Computer Science, HSE University, Pokrovsky Boulevard 11, Moscow, 109028 Russia}
\email{arjantsev@hse.ru}
\author{Kirill Shakhmatov}
\address{Faculty of Computer Science, HSE University, Pokrovsky Boulevard 11, Moscow, 109028 Russia}
\email{kshahmatov@hse.ru}
\thanks{The article was prepared within the framework of the project ``International Academic Cooperation'' HSE University}
\subjclass[2020]{Primary 14M05, 14M17; \ Secondary 13A05, 13F15, 14R20}
\keywords{Affine variety, divisor, automorphism, homogeneous space, unique factorization}

\maketitle


\begin{abstract}
We give a criterion of factoriality of a suspension. This allows to construct many examples of flexible affine factorial varieties. In particular, we find
a homogeneous affine factorial 3-fold that is not a homogeneous space of an algebraic group. 
\end{abstract} 


\section*{Introduction}

We work over an algebraically closed field~$\KK$ of characteristic zero. Let us say that an algebraic variety $X$ is homogeneous if the automorphism group $\Aut(X)$ acts on $X$ transitively. Examples of homogeneous varieties are homogeneous spaces of algebraic groups. Namely, if $G$ is an algebraic group and $H$ is a closed subgroup of $G$, then the variety $X=G/H$ of left cosets is obviously a homogeneous variety.

Since the group $\Aut(X)$ need not be algebraic, one may expect that the class of homogeneous varieties is wider than the class of homogeneous spaces. This is indeed the case. For example, any smooth quasi-affine toric variety is a homogeneous variety; see ~\cite[Theorem~0.2(2)]{AKZ} and~\cite[Theorem~4.3(a)]{AShZ}. An explicit example of a smooth quasi-affine toric surface that is not a homogeneous space can be found in~\cite[Example~2.2]{AKZ}.

In~\cite{AZa}, we study homogeneous affine varieties. The study is based on the concept of a flexible variety and on the construction of a suspension. Let $\GG_\mathrm{a} $ be the additive group of the ground field~$\KK$ and $\GG_\mathrm{a} \times X \to X$ be a regular action on an algebraic variety $X$. The corresponding subgroup of the automorphism group $\Aut(X)$ is called a $\GG_\mathrm{a}$-subgroup. The special automorphism group $\SAut(X)$ is the subgroup of $\Aut(X)$ generated by all $\GG_\mathrm{a}$-subgroups.

Denote by $\Xreg$ the set of smooth points in~$X$. A point $x\in \Xreg$ is called flexible if the tangent space $T_xX$ is spanned by tangents to orbits of $\GG_\mathrm{a}$-subgroups passing through the point~$x$. A variety $X$ is called flexible if any point $x \in \Xreg$ is flexible.

In \cite[Theorem 0.1]{AFKKZ} it is proved that an irreducible affine variety $X$ is flexible if and only if the group $\SAut(X)$ acts on $\Xreg$ transitively. Moreover, if $X$ has dimension at least~$2$, then these conditions are equivalent to the condition that $\SAut(X)$ acts on $\Xreg$ infinitely transitively. Recall that an action of a group $G$ on a set $X$ is infinitely transitive if it is $m$-transitive for any positive integer $m$, i.e., for any two tuples $(x_1, \ldots, x_m)$ and $(y_1, \ldots, y_m)$ of pairwise distinct points of $X$ there exists an element $g \in G$ such that $gx_i = y_i$ for all~$1 \le i \le m$.

Now we recall the definition of a suspension. Let $Y$ be an affine variety and $f \in \KK[Y]$ be a non-constant regular function. The hypersurface $\Susp(Y, f)$ given in the direct product $\AA^2 \times Y$ by the equation $uv = f$, where $\AA^2 = \Spec \KK[u,v]$, is called a suspension over $Y$.  In the context of automorphism groups suspensions were considered for the first time in~\cite{KZ}. In particular, it is proved in~\cite{KZ} that any suspension over an affine space is flexible. In~\cite[Theorem 0.2(3)]{AKZ} we show that $\Susp(Y, f)$ is flexible provided the variety $Y$ is flexible.

Another reason to study suspensions is topological considerations in the case of ground field $\KK = \CC$. For example, one can compute the homology groups of a suspension $X =$ \linebreak
$= \Susp(\CC^n, f)$ using the homology groups of the hypersurface $\{ f = 0 \}$ in $\CC^n$. In particular, if $X$ is smooth and $\{ f = 0 \}$ is acyclic, then $X$ is contractible \cite[Corollary~4.1]{KZ}. Theoretically, these ideas may be used to provide a counterexample to Zariski Cancellation Problem, or to construct an exotic algebraic structure on $\CC^n$. See~\cite[Section~4]{KZ} for details.

In~\cite[Corollary~1]{AZa} it is proved that $\Susp(Y, f)$ is smooth if and only if both the variety~$Y$ and the scheme $\Spec\KK[Y] / (f)$ are smooth. Since any smooth flexible variety is homogeneous, we obtain many examples of homogeneous affine varieties. Moreover, it is shown in~\cite[Lemma~2]{AZa}  that the Picard number of an affine homogeneous space $X$ is at most $\dim X$.  Since we can construct smooth suspensions over an affine space with the Picard number bigger than the dimension of the suspension, we obtain homogeneous affine varieties that are not homogeneous spaces; see~\cite[Theorem~4]{AZa}.

In this paper we concentrate on homogeneous affine factorial varieties. Again examples of such varieties can be found among homogeneous spaces.  Recall that any connected linear algebraic group $G$ is a semidirect product $G^{\red} \ltimes R_u(G)$ of a reductive group $G^{\red}$ and the unipotent radical $R_u(G)$, see e.g.~\cite[Theorem~6.4]{OV}.  If the group $G^{\red}$ is a direct product of a torus $Z$ and a simply connected semisimple group $G^{\ssss}$,  then the group $G$ is factorial~\cite[Theorem~6]{Po}. Moreover, if $H$ is a connected semisimple subgroup of such a group $G$, then the homogeneous space $G/H$ is affine and by~\cite[Theorem~4]{Po} the variety $G/H$ is factorial.

Our aim is to construct many flexible affine factorial varieties via suspensions. The paper is organized as follows. We begin with some preparatory lemmas in Section~\ref{sec1}. They are used in Section~\ref{sec2} to prove that the suspension $X=\Susp(Y,f)$ is a factorial variety if and only if $Y$ is factorial and $f$ is a prime element in $\KK[Y]$. This result (Theorem~\ref{tmain}) is obtained in the broadest possible context of arbitrary commutative domains. In these settings the passage from $Y$ to $X=\Susp(Y,f)$  may be considered as a generalization of replacing a ring $R$ with the Laurent polynomial ring $R[T,T^{-1}]$.

Combining this with the criterion of smoothness of suspensions from~\cite[Corollary~1]{AZa}, we observe in Section~\ref{sec3} that for a smooth flexible affine factorial variety $Y$ the suspension $\Susp(Y,f)$ is smooth factorial if and only if the function $f$ is prime in $\KK[Y]$ and $\{f=0\}$ is a smooth subvariety in $Y$. It provides many examples of homogeneous affine factorial varieties. We proceed with some results on topology of suspensions over the field of complex numbers. We prove that if the variety $Y$ is simply connected, then any suspension $\Susp(Y,f)$ is simply connected as well (Proposition~\ref{psc}). It follows from~\cite[Theorem~2 and Proposition~19]{Po-2} that there are precisely three pairwise non-isomorphic affine factorial 3-folds without non-constant invertible functions that are homogeneous spaces of algebraic groups: they are $\CC^3$, $\SL(2)$ and $\SL(2)/2I$, where $2I$ is the binary icosahedral group. We show that the hypersurface $\{x^2y-xy-uv+1=0\}$, which is  the suspension $X=\Susp(\CC^2,(x-1)xy+1)$,  has $\ZZ^2$ as the third homotopy group~$\pi_3(X)$. We conclude that $X$ is a homogeneous affine factorial 3-fold that is not a homogeneous space of an algebraic group (Theorem~\ref{texa}).

It is an important problem to find algebraic invariants that allow us to distinguish between non-isomorphic smooth flexible affine factorial varieties of the same dimension. For example, in order to prove that a suspension $\Susp(\AA^2,f)$ is not isomorphic to $\AA^3$ one may use the positive solution of the Linearization Problem, see~\cite{KR}. It implies that any action of an algebraic torus on $\AA^3$ has a fixed point. So it suffices to find a free action of a one-dimensional torus on $\Susp(\AA^2,f)$. It seems to be a more difficult task to distinguish between some $\Susp(\AA^2,f)$ and $\SL(2)$.

The divisor class group of a suspension $\Susp(Y,f)$ is computed in Section~\ref{sec4}. This gives an alternative proof of Theorem~\ref{tmain} in geometric settings. Finally, in Section~\ref{sec5} we show that the natural properties of the projection from $\Susp(Y,f)$ to $Y$ are not sufficient to characterize suspensions.

The authors thank the referees for useful comments and  suggestions.


\section{Basic lemmas}
\label{sec1}


Let $R$ be a commutative associative ring with unit. Consider the polynomial ring $R[u,v]$ and fix an element $f\in R$. The main object studied in this section is the factor ring
$$
S:=R[u,v]/(uv-f).
$$
This ring carries a natural $\ZZ$-grading $S=\oplus_{i\in\ZZ} S_i$ given by
$$
S_0=R, \quad S_i=Ru^i  \quad \text{and} \quad S_{-i}=Rv^i \quad \text{for}  \ i>0.
$$
Indeed, the relation $uv=f$ implies that $S$ is the sum of subspaces $S_i$. The inclusion ${S_iS_j\subseteq S_{i+j}}$ is clear.  If we have $r_0+\sum_{i>0} r_iu^i+\sum_{j>0} r_j'v^j=0$ in $S$, then
$
r_0+\sum_{i>0} r_iu^i+\sum_{j>0} r_j'v^j=h(u,v)(uv-f)
$
in $R[u,v]$ for some polynomial $h(u,v)$. Consider the lexicographic order $\succ$ on terms in $u$ and $v$ with $u\succ v$. If $h$ is nonzero, then the leading term of $h(u,v)(uv-f)$ is divisible by $uv$, while there is no such term on the left hand side.

\smallskip

The following lemma generalizes~\cite[Lemma~3.1]{AKZ}.

\begin{lemma} \label{lemzd}
For a nonzero $f$, the ring $S$ is a domain if and only if $R$ is a domain.
\end{lemma}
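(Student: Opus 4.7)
The plan is to exploit the $\ZZ$-grading $S=\bigoplus_{i\in\ZZ}S_i$ established in the paragraph preceding the lemma, and prove both implications by chasing leading homogeneous components.

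For the ``only if'' direction I simply observe that the grading just constructed makes the decomposition a \emph{direct} sum, so the natural map $R \to S$ identifies $R$ with the subring $S_0 \subseteq S$. Any subring of a domain is a domain, so $R$ is a domain whenever $S$ is.

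For the ``if'' direction, assume $R$ is a domain and let $a,b\in S$ be nonzero. Decompose $a=\sum_{i=p}^{P}a_i$ and $b=\sum_{j=q}^{Q}b_j$ with $a_i,b_j\in S_i, S_j$, where $P$, $Q$ are the largest indices with $a_P\neq 0$, $b_Q\neq 0$. The strategy is to isolate the top-degree component of $ab$: since $S_i\cdot S_j\subseteq S_{i+j}$, the degree-$(P+Q)$ component of $ab$ is
\[
(ab)_{P+Q}=\sum_{\substack{i+j=P+Q\\ p\le i\le P,\; q\le j\le Q}} a_ib_j,
\]
and the constraints $i\le P$, $j\le Q$, $i+j=P+Q$ force $(i,j)=(P,Q)$, so $(ab)_{P+Q}=a_Pb_Q$. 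It therefore suffices to prove $a_Pb_Q\neq 0$.

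Write $a_P=r\,w^{|P|}$ and $b_Q=s\,w'^{|Q|}$ with $r,s\in R\setminus\{0\}$, where $w\in\{u,v\}$ is chosen according to the sign of $P$ (and similarly for $w'$ and $Q$). Using $uv=f$ repeatedly to reduce $u^Pv^{-Q}$ or $v^{-P}u^Q$ in the mixed-sign cases, a short case analysis shows that in every case
\[
a_Pb_Q = rs\,f^{k}\cdot z_{P+Q},
\]
where $k\ge 0$ is an integer depending on the signs of $P,Q$ and $z_{P+Q}$ is the canonical generator $u^{P+Q}$, $1$, or $v^{-(P+Q)}$ of $S_{P+Q}$ (viewed as the free rank-one $R$-module $Ru^{P+Q}$ or $Rv^{-(P+Q)}$). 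Since $R$ is a domain and $r,s,f$ are all nonzero, $rs\,f^{k}\neq 0$ in $R$, and therefore $a_Pb_Q\neq 0$ in $S_{P+Q}$. Consequently $ab\neq 0$, proving $S$ is a domain.

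The only step that requires care is the case analysis producing the power $f^k$: this is precisely where the hypothesis $f\neq 0$ is used, and this is also the place where the lemma genuinely generalizes the polynomial-grading argument already given for the direct-sum decomposition. The rest is bookkeeping in a graded ring, so I do not expect a serious obstacle beyond keeping the sign conventions straight in the mixed case $PQ<0$.
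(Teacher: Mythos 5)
Your proposal is correct and follows essentially the same route as the paper: identify $R$ with $S_0$ for the forward direction, and for the converse show that the top-degree homogeneous component of a product of nonzero elements equals the product of their top components, which works out to $rs\,f^k$ times a monomial generator and is nonzero because $R$ is a domain and $f\neq 0$. The paper's proof phrases this via a leading-component operator $L(g)$ and the identity $L(g_1)L(g_2)=abf^du^pv^q$ with $pq=0$, which is exactly your case analysis.
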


\begin{proof}
The direct implication follows from the inclusion $R=S_0\subseteq S$. Let us prove the converse.

Assume that $g_1g_2=0$ for some nonzero $g_1,g_2\in S$. Denote by $L(g)$ the nonzero homogeneous component of maximal degree of a nonzero element $g\in S$. Then $L(g_1)L(g_2)=0$.  On the other hand, we have $L(g_1)L(g_2)=abf^du^pv^q,$ where $a,b\in R\setminus\{0\}$ and $p,q,d$ are non-negative integers with $pq=0$. This leads to a contradiction.
\end{proof}

If the element $f$ is invertible in $R$, then replacing $v$ by $fv$ we come to the relation $uv=1$. This shows that $S$ is the ring of Laurent polynomials over $R$.

\smallskip

From now on we assume that $R$ is a domain and $f$ is nonzero non-invertible in $R$. Denote by $A^{\times}$ the set of invertible elements of a domain $A$.

\begin{lemma} \label{nl1}
We have $R^{\times}=S^{\times}$. In particular, the elements $u$ and $v$ are not invertible. 
\end{lemma}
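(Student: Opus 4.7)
The plan is to exploit the $\ZZ$-grading $S=\bigoplus_{i\in\ZZ}S_i$ established just above Lemma~\ref{lemzd}, together with the fact that $S$ is a domain (Lemma~\ref{lemzd}). The inclusion $R^{\times}\subseteq S^{\times}$ is immediate from $R=S_0\subseteq S$, so the whole content is the reverse inclusion.

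Let $g\in S^{\times}$ with inverse $g'\in S$. Write $g=\sum_i g_i$ and $g'=\sum_j g'_j$ with $g_i,g'_j\in S_i,S_j$, and let $p,\,p'$ (resp.\ $q,\,q'$) be the maximal and minimal indices with $g_p,g_{p'}\ne 0$ (resp.\ $g'_q,g'_{q'}\ne 0$). Since $S$ is a graded domain, the homogeneous components $g_p g'_q\in S_{p+q}$ and $g_{p'}g'_{q'}\in S_{p'+q'}$ are nonzero; they are the top and bottom homogeneous components of $gg'=1\in S_0$. Hence $p+q=0$ and $p'+q'=0$, which combined with $p\ge p'$ and $q\ge q'$ forces $p=p'=-q=-q'$. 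Thus $g$ and $g'$ are both homogeneous, say $g=ru^p$ and $g'=sv^p$ for some $r,s\in R\setminus\{0\}$ if $p\ge 0$ (the case $p<0$ being symmetric).

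Now use the defining relation $uv=f$: the identity $gg'=1$ becomes $rsf^{p}=1$ in $R$. If $p>0$, this would force $f$ to be invertible in $R$, contradicting the standing assumption; hence $p=0$, i.e.\ $g\in S_0=R$, and then $g'\in R$ as well, so $g\in R^{\times}$. This proves $S^{\times}\subseteq R^{\times}$.

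The "in particular" statement is then immediate: $u\in S_1$ and $v\in S_{-1}$ are not in $S_0=R$, so they cannot be invertible. The main (and essentially only) obstacle is the clean bookkeeping of top and bottom components, which is why it is important to have established beforehand that the decomposition $S=\bigoplus S_i$ is a genuine grading and that $S$ is a domain; once these are in hand, the argument is short.
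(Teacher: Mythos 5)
Your proof is correct and follows essentially the same route as the paper: both reduce to showing that a unit of $S$ must be homogeneous (which you justify carefully via the top and bottom components in the graded domain $S$, a step the paper states more tersely), and then use $uv=f$ together with the non-invertibility of $f$ to force the degree to be zero. No gaps.
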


\begin{proof}
Let $g\in S$ and $gh=1$ for some $h\in S$. Since $1$ is homogeneous, the elements $g$ and $h$ are homogeneous as well.  So we may assume that $g=u^da$ and $h=v^db$, where $a,b\in R$. If $d=0$ then $g\in R^{\times}$. If $d>0$ then $gh=f^dab=1$.  This contradicts the condition that $f$ is not invertible.
\end{proof}

Let us recall that a nonzero element $r$ in a domain $R$ is irreducible if $r$ is not invertible and if whenever $r=r_1r_2$ with $r_1,r_2\in R$, then one of $r_1$ and $r_2$ is invertible.

\begin{lemma} \label{nl2}
The elements $u$ and $v$ are irreducible in $S$.
\end{lemma}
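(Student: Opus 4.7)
The plan is to reduce the statement to a homogeneity argument, using that $S=\bigoplus_{i\in\ZZ} S_i$ is $\ZZ$-graded and, by Lemma~\ref{lemzd}, a domain; the element $u$ is homogeneous of degree $1$. By Lemma~\ref{nl1} it is non-invertible, so it remains to rule out nontrivial factorizations.

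First I would reduce any putative factorization $u=g_1g_2$ to a factorization into homogeneous elements. Write $g_1=\sum_i g_1^{(i)}$ and $g_2=\sum_j g_2^{(j)}$ in terms of their homogeneous components, and let $L(g_1), L(g_2)$ and $l(g_1), l(g_2)$ denote the nonzero components of maximal and minimal degree respectively. Since $S$ is a domain, the top component of $g_1g_2$ is $L(g_1)L(g_2)$ and the bottom one is $l(g_1)l(g_2)$. Because $u$ is itself homogeneous, these components coincide, forcing each $g_k$ to have a single nonzero homogeneous component. Hence $g_k$ is of the form $u^{a_k}r_k$ (if $\deg g_k\ge 0$) or $v^{b_k}r_k$ (if $\deg g_k<0$) for some $r_k\in R$, and $\deg g_1+\deg g_2=1$.

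Next I would do a short case analysis on the signs of the degrees. If both degrees are nonnegative, then (up to swapping) $g_1=r_1\in R$ and $g_2=ur_2$; the equality $u=r_1r_2 u$ combined with the fact that $S$ is a domain gives $r_1r_2=1$, so $g_1\in R^\times=S^\times$ by Lemma~\ref{nl1}. The only remaining case is $\deg g_1=a\ge 0$ and $\deg g_2=-b<0$ with $a-b=1$; then using the relation $uv=f$ we compute
\[
u = g_1g_2 = u^a v^b r_1r_2 = u^{a-b} f^b r_1r_2 = u f^b r_1r_2,
\]
and cancelling $u$ in the domain $S$ yields $f^b r_1 r_2=1$ with $b\ge 1$. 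This contradicts the assumption that $f$ is non-invertible in $R$. The case of both degrees negative is impossible since their sum is $1$.

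The argument for $v$ is completely symmetric, obtained by interchanging $u$ and $v$. The main point to handle carefully is the mixed-sign case, where one must convert the product $u^av^b$ into $u^{a-b}f^b$ (or $v^{b-a}f^a$) via the defining relation and then exploit non-invertibility of $f$; the rest is formal manipulation of the grading.
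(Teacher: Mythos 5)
Your proposal is correct and follows essentially the same route as the paper: reduce to a factorization into homogeneous elements via the $\ZZ$-grading, write the factors as $u^{a}r_1$ and $v^{b}r_2$, use $uv=f$ to obtain an equation $f^{b}r_1r_2=1$, and invoke the non-invertibility of $f$. You merely spell out in more detail the (standard) graded-domain argument for why the factors must be homogeneous, and split into explicit sign cases where the paper uses a single parametrization $g=u^{d+1}a$, $h=v^{d}b$.
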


\begin{proof}
If $u=gh$ then $g$ and $h$ are homogeneous. We may assume that $g=u^{d+1}a$ and $h=v^db$ for $a,b\in R$. Then $u=gh=uf^dab$. This implies $f^dab=1$. Since $f$ is not invertible, we have $d=0$ and $b$ is invertible. So the element $h$ is invertible. We conclude that $u$ is irreducible. The same arguments work for $v$.
\end{proof}

\begin{lemma} \label{nl3}
Let $a\in R$. Then
\begin{enumerate}
\item[\normalfont{(i)}]
if $a$ is irreducible in $S$, then $a$ is irreducible in $R$;


\item[\normalfont{(ii)}]
if $a$ is irreducible in $R$ and not associated with $f$, then $a$ is irreducible in $S$.
\end{enumerate}
\end{lemma}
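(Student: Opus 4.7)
This is essentially immediate. I would note that an irreducible element $a\in S$ is nonzero and non-invertible, hence non-invertible in $R$ by Lemma~\ref{nl1}. Any factorization $a=r_1r_2$ in $R$ is automatically a factorization in $S$, so irreducibility in $S$ combined with Lemma~\ref{nl1} yields an invertible factor in $R$, giving irreducibility of $a$ in $R$.

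\textbf{Part (ii), plan.} The strategy is to exploit the $\ZZ$-grading $S=\bigoplus_{i\in\ZZ}S_i$ introduced at the start of the section in order to reduce a factorization of $a$ in $S$ to a factorization controlled by the structure of $R$. First I would observe that $a\ne 0$ and $a\notin R^{\times}=S^{\times}$, so $a$ is non-invertible in $S$. Given $a=gh$ with $g,h\in S$, I would decompose $g=\sum_{i=m}^{M}g_i$ and $h=\sum_{j=m'}^{M'}h_j$ with extreme components $g_m,g_M,h_{m'},h_{M'}$ all nonzero, and note that $(M,M')$ is the unique pair of indices with $i+j=M+M'$ contributing to $gh$; similarly for $(m,m')$. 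Hence the degree-$(M+M')$ component of $gh$ is $g_Mh_{M'}$ and the degree-$(m+m')$ component is $g_mh_{m'}$. I would then check that both of these products are nonzero in $S$, so that comparison with $a\in S_0$ forces $M+M'=0=m+m'$. Combined with $m\le M$ and $m'\le M'$, this yields $m=M$ and $m'=M'$, i.e., both $g$ and $h$ are homogeneous.

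\textbf{Finishing.} Once $g$ and $h$ are homogeneous, up to interchanging them I can write $g=\alpha u^d$, $h=\beta v^d$ with $\alpha,\beta\in R$ and $d\ge 0$, whence $a=\alpha\beta f^d$ in $R$. If $d=0$, irreducibility of $a$ in $R$ puts $\alpha$ or $\beta$ in $R^{\times}=S^{\times}$. If $d\ge 1$, the factorization $a=f\cdot(\alpha\beta f^{d-1})$ in $R$ forces $\alpha\beta f^{d-1}\in R^{\times}$, since $f$ is not invertible and $a$ is irreducible in $R$; this in turn forces $d=1$ (otherwise $f$ itself would be invertible), and then $a\sim f$ in $R$, contradicting the hypothesis that $a$ is not associated with $f$. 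The main technical point will be the verification that $g_Mh_{M'}\ne 0$ when $M$ and $M'$ have opposite signs, where one must rewrite a monomial of the form $u^Mv^{-M'}$ using $uv=f$ and invoke that $R$ is a domain and $f\ne 0$; the remaining same-sign cases and everything else in the argument are formal.
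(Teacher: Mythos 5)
Your proof is correct and follows essentially the same route as the paper: reduce to homogeneous factors $g=\alpha u^d$, $h=\beta v^d$, get $a=\alpha\beta f^d$ in $R$, and use irreducibility of $a$ together with non-invertibility of $f$ to force $d=0$ or a contradiction with $a\not\sim f$. The only difference is that you spell out (correctly) the step that a factorization of the homogeneous element $a$ has homogeneous factors, which the paper takes for granted after using the same fact in Lemmas~\ref{nl1} and~\ref{nl2}.
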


\begin{proof}
We start with~(i). If $a=cd$ with $c,d\in R$, then we may assume that $c$ is invertible in~$S$. By Lemma~\ref{nl1}, the element $c$ is invertible in $R$, and so $a$ is irreducible in~$R$.

Now we come to~(ii). If $a=gh$ and $g,h\in S$, then $g=u^db$, $h=v^dc$ for some $b,c\in R$. If $d=0$ then either $g$ or $h$ is invertible by assumption. Assume that $d>0$.  Then $a=gh=f^dbc$. Since $a$ is irreducible in $R$, we have $d=1$ and $bc$ is invertible. Then $a$ is associated with $f$, a contradiction.
\end{proof}

\begin{lemma} \label{nl4}
An element $g=b_lv^l+\ldots+b_1v+b_0$, $b_i\in R$ is divisible by $u$ if and only if all the $b_i$ are divisible by $f$.
\end{lemma}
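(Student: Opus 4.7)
The plan is to handle the two implications separately, with the easy direction being a direct computation using $uv=f$, and the harder direction relying on the uniqueness of the $\ZZ$-graded decomposition of $S$ established at the start of Section~\ref{sec1}.

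For the ``if'' direction, I would assume that each $b_i = f c_i$ for some $c_i \in R$ and simply rewrite using the defining relation: $b_0 = f c_0 = u(c_0 v)$ and $b_i v^i = f c_i v^i = u \cdot (c_i v^{i+1})$ for $i \ge 1$. Summing exhibits $g$ as $u \cdot v(c_0 + c_1 v + \ldots + c_l v^l)$, confirming divisibility by $u$ in $S$.

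For the ``only if'' direction, I would invoke the fact proved at the beginning of Section~\ref{sec1}, namely that every element of $S$ admits a unique representation $r_0 + \sum_{i>0} r_i u^i + \sum_{j>0} r'_j v^j$ with $r_0, r_i, r'_j \in R$. Writing $g = uh$, I express $h$ in this canonical form as $h = \sum_{k \ge 0} a_k u^k + \sum_{j \ge 1} a'_j v^j$. Multiplying by $u$ and using $uv = f$ term by term yields $uh = \sum_{k \ge 0} a_k u^{k+1} + f a'_1 + \sum_{j \ge 2} f a'_j v^{j-1}$, which is again in canonical form. Comparing this with the canonical form of $g = b_0 + \sum_{i \ge 1} b_i v^i$ and appealing to uniqueness forces $a_k = 0$ for all $k \ge 0$, $b_0 = f a'_1$, and $b_i = f a'_{i+1}$ for $i \ge 1$, so every coefficient $b_i$ is divisible by $f$.

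There is no real obstacle here beyond careful bookkeeping; the key point is to expand $uh$ into the canonical $\ZZ$-graded form before matching coefficients, since without the uniqueness of that representation one could not conclude anything about individual $b_i$. The whole argument is self-contained within the framework set up at the opening of Section~\ref{sec1} and does not require $R$ to be a domain.
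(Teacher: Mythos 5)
Your proof is correct and follows essentially the same route as the paper: the paper likewise verifies the ``if'' direction by the substitution $f=uv$ and, for the ``only if'' direction, writes $g=uh$ and uses the $\ZZ$-graded decomposition of $S$ to see that $h$ must lie in negative degrees, so that $uh$ has all coefficients multiplied by $f$. Your version just makes the appeal to uniqueness of the graded representation (and the vanishing of the nonnegative-degree part of $h$) more explicit; the observation that the domain hypothesis on $R$ is not needed here is accurate.
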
 

\begin{proof}
If every $b_i$ is divisible by $f=uv$, then $g$ is divisible by $u$. Conversely, if $g=uh$, then $h=c_{l+1}v^{l+1}+\ldots+c_2v^2+c_1v$, where $c_i\in R$, and so
\[
g = u h = c_{l + 1} f v^l + \ldots + c_2 f v + c_1 f.
\qedhere
\]
\end{proof}

Recall that a nonzero non-invertible element $r$ of a domain $R$ is prime if whenever $r$ divides a product $ab$, then $r$ divides either $a$ or $b$. In other words, the principal ideal $(r)$ is prime in~$R$. Note that any prime element is irreducible.

\begin{proposition} \label{np1}
Let $R$ be a domain and $f \in R$ be a nonzero non-invertible element. Denote $S = R[u, v] / (u v - f)$. Then the element $u$ is prime in $S$ if and only if the element $v$ is prime in $S$ if and only if the element $f$ is prime in $R$.
\end{proposition}

\begin{proof}
By symmetry, it suffices to prove that $u$ is prime if and only if $f$ is prime in $R$.  Assume that $f$ is prime in $R$. Take two elements $g,h\in S$ not divisible by $u$. We may assume that
$$
g=b_lv^l+\ldots+b_1v+b_0
\quad \text{and} \quad
h=c_mv^m+\ldots+c_1v+c_0,
$$
where all $b_i,c_j\in R$ are not divisible by $f$. The lowest homogeneous component of the product $gh$ equals $b_lc_mv^{l+m}$. By assumptions, the element $b_lc_m$ is not divisible by $f$. By Lemma~\ref{nl4}, the element $gh$ is not divisible by $u$. We conclude that $u$ is prime.

Now assume that $u$ is prime. If $f$ divides $ab$ for some $a,b\in R$, then $uv$ divides $ab$ in $S$. Since $u$ is prime, we may assume that $u$ divides $a$. Then $a=uc$ with $c=vd$ and $d\in R$. This implies $a=uvd=fd$, so $f$ divides $a$ in $R$, and the element $f$ is prime in $R$.
\end{proof}

The following result shows that the construction of suspension preserves normality.

\begin{proposition} \label{norm}
Let $R$ be a domain and $f \in R$ be a nonzero non-invertible element. Then the ring $S = R[u, v] / (u v - f)$ is integrally closed if and only if the ring $R$ is integrally closed.
\end{proposition}

\begin{proof}
Assume that $S$ is integrally closed. Consider $a / b \in \Frac(R)$ integral over $R$, where $a, b \in R$ and $b \ne 0$. Since $S$ is integrally closed, we have $a / b = g$ for some $g \in S$. From $a = b g$ it follows that $g \in R$. Therefore, $a / b \in R$ and $R$ is integrally closed.

Conversely, assume that $R$ is integrally closed. Consider $\phi = p / q \in \Frac(S)$ integral over~$S$, where $p, q \in S$ and $q \ne 0$. For all $j \in \ZZ_{\ge 0 }$ we have
$$
\phi = \frac{p}{q} = \frac{u^j p}{u^j q},
$$
hence we may assume that $p, q \in R[u]$. Consider an integral equation
$$
\phi^m + s_1 \phi^{m - 1} + \ldots + s_m = 0,
$$
where $s_i \in S$ for all $i = 1, \dots, m$. Multiplying this equation by $u^{m k}$ for large enough $k \in \ZZ_{\ge 0}$, we see that $u^k \phi \in \Frac(R[u])$ is integral over $R[u]$. But $R[u]$ is integrally closed, hence $u^k \phi = g \in R[u]$. By a symmetric argument for $v$ there exists $l \in \ZZ_{\ge 0}$ such that $v^l \phi = h \in R[v]$. We obtain
$$
\phi = \frac{g}{u^k} = \frac{h}{v^l}.
$$
For $d = \max(k, l)$ we can rewrite these equations as
$$
\phi = \frac{u^{d - k} g}{u^d} = \frac{v^{d - k} h}{v^d}.
$$
Clearly, $u^{d - k} g \in R[u]$ and $v^{d - k} h \in R[v]$. So it remains to prove the following lemma.

\begin{lemma} \label{le}
Consider an element $\phi \in \Frac(S)$. Assume that there exist $d \in \ZZ_{\ge 0}$, $g \in R[u]$, and $h \in R[v]$ such that
$$
\phi = \frac{g}{u^d} =\frac{h}{v^d}.
$$
Then $\phi \in S$.
\end{lemma}

\begin{proof}
We argue by induction on $d$. The case $d = 0$ is trivial. Assume that the assertion is true for some $d \in \ZZ_{\ge 0}$. Consider $\phi \in \Frac(S)$, $g \in R[u]$, and $h \in R[v]$ such that
$$
\phi = \frac{g}{u^{d + 1}} =\frac{h}{v^{d + 1}}.
$$
Write
$$
g = a + u g' \quad \text{and} \quad h = b + v h',
$$
where $a, b \in R$, $g' \in R[u]$, and $h' \in R[v]$. Denote
$$
g' = \sum_i a_i u^i \quad
\text{and} \quad
h' = \sum_i b_i v^i,
$$
where all $a_i, b_i$ are elements of $R$. Comparing coefficients of $v^{d + 1}$ and $u^{d + 1}$ in the equality $v^{d + 1} g = u^{d + 1} h$ we conclude that
$$
a = b_{2 d + 1} f^{d + 1} \quad
\text{and} \quad
b = a_{2 d + 1} f^{d + 1}.
$$
Therefore,
$$
\frac{g}{u^{d + 1}} = \frac{a}{u^{d + 1}} + \frac{g'}{u^d} =
b_{2 d + 1} v^{d + 1} + \frac{g'}{u^d} \quad
\text{and} \quad
\frac{h}{v^{d + 1}} = \frac{b}{v^{d + 1}} + \frac{h'}{v^d} =
a_{2 d + 1} u^{d + 1} + \frac{h'}{v^d}.
$$
Denote $\phi' = \phi - b_{2 d + 1} v^{d + 1} - a_{2 d + 1} u^{d + 1}$. Then
$$
\phi' = \frac{g'}{u^d} - a_{2 d + 1} u^{d + 1} =
\frac{h'}{v^d} - b_{2 d + 1} v^{d + 1}.
$$
Clearly, the elements
$$
\frac{g'}{u^d} - a_{2 d + 1} u^{d + 1} \quad 
\text{and} \quad
\frac{h'}{v^d} - b_{2 d + 1} v^{d + 1}
$$
can be written in the form $\frac{g''}{u^d}$ and $\frac{h''}{v^d}$ respectively, where $g'' \in R[u]$ and $h'' \in R[v]$. By the induction hypothesis $\phi' \in S$. Since $- b_{2 d + 1} v^{d + 1} - a_{2 d + 1} u^{d + 1} \in S$, we conclude that $\phi \in S$.
\end{proof}
This completes the proof of Proposition~\ref{norm}.
\end{proof}


\section{A criterion of factoriality}
\label{sec2}


Now we come to factoriality. A domain $R$ is a unique factorization domain (UFD) if nonzero non-invertible elements of $R$ can be factored uniquely into irreducible elements up to renumbering and invertible factors. A domain $R$ is a UFD if and only if any irreducible element in $R$ is prime and any ascending chain of principal ideals in $R$ terminates, see e.g. \cite[Section~0.2]{Eis}. 

\begin{theorem} \label{tmain}
Let $R$ be a domain and $f \in R$ be a nonzero non-invertible element. Then the domain
$$
S = R[u, v] / (u v - f)
$$
is a UFD  if and only if $R$ is a UFD and $f$ is prime in $R$. 
\end{theorem}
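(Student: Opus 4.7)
The plan is to prove both implications, with the reverse direction being the more intricate one.

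(Forward) Assume $S$ is a UFD. Lemma~\ref{nl2} gives $u$ irreducible in $S$, hence prime, so by Proposition~\ref{np1} the element $f$ is prime in $R$. For $R$ to be a UFD, I verify both defining conditions. Any ascending chain of principal ideals in $R$ extends to an ascending chain in $S$ which remains strict because $R^{\times} = S^{\times}$ by Lemma~\ref{nl1}, and so its stabilization in $S$ forces stabilization in $R$. Every irreducible $a \in R$ is prime: if $a$ is associated with $f$ this is direct; otherwise Lemma~\ref{nl3}(ii) makes $a$ irreducible hence prime in $S$, and primality descends to $R$ via the $\ZZ$-grading (if $a \mid b$ in $S$ with $a, b \in R$, writing $b = ag$ and decomposing $g$ into homogeneous components forces $g$ to lie in $S_0 = R$, since $a \neq 0$ and $S$ is a domain).

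(Reverse) Assume $R$ is a UFD and $f$ is prime in $R$. By Proposition~\ref{np1} the element $u$ is prime in $S$. Inverting $u$ makes $v = f/u$ redundant, so $S[u^{-1}] = R[u, u^{-1}]$, a UFD as a localization of the polynomial ring over $R$. I would then invoke Nagata's criterion: a domain is a UFD once it satisfies ACC on principal ideals and its localization at a prime element is a UFD. So it remains to verify ACC in $S$. Given a chain $(g_1) \subseteq (g_2) \subseteq \ldots$ in $S$ (with all terms nonzero past the start), it stabilizes in the UFD $S[u^{-1}]$ beyond some index $N$, whence each subsequent ratio $g_i/g_{i+1} \in S$ is a unit $u^{e} r$ of $R[u, u^{-1}]$ with $r \in R^{\times}$ and $e \in \ZZ$. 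Lemma~\ref{nl1} forces $e \geq 0$ (otherwise a power of $u$ would have to divide a unit of $R$), and strictness of the inclusion forces $e \geq 1$; a non-stabilizing chain would therefore make $g_N$ divisible by $u^n$ in $S$ for every $n$.

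The crux, and the main technical obstacle, is to preclude such infinite divisibility. Iterating Lemma~\ref{nl4} translates divisibility of $g_N$ by $u^n$ in $S$ into divisibility of each $R$-coefficient of $g_N$ (in its expansion along $\{1, u, u^2, \ldots\} \cup \{v, v^2, \ldots\}$) by an appropriate power of $f$ in $R$: explicitly, one division by $u$ shifts the $u$-coefficients down and turns the $R[v]$-part into $v$ times a polynomial with each coefficient divided by $f$, so after $n$ steps the $k$-th $u$-coefficient is divisible by $f^{\max(n-k,0)}$ and every $v$-coefficient by $f^n$. Since $R$ is a UFD and $f$ is a nonzero non-unit, $\bigcap_n (f^n) = 0$ in $R$, so infinite $u$-divisibility of $g_N$ forces every coefficient to vanish, giving $g_N = 0$ and contradicting strictness. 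This completes the ACC verification and hence the proof.
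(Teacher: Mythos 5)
Your proof is correct, and while your forward implication coincides with the paper's (prime $f$ via Lemma~\ref{nl2} and Proposition~\ref{np1}; ACCP and primality of irreducibles transferred from $S$ to $R$ using Lemmas~\ref{nl1} and~\ref{nl3} -- you usefully add the grading argument showing that divisibility between elements of $R$ descends from $S$ to $R$), your reverse implication takes a genuinely different route. The paper proves directly that every irreducible $g\in S$ is prime: it reduces to the case $u\nmid g$, clears denominators to get $u^lg\in R[u]$ with $l$ minimal, shows $u^lg$ is irreducible hence prime in the UFD $R[u]$, and then pulls primality back to $S$; its ACCP step is dispatched by citing factoriality of $R[u,v]$ and passing to the factor ring $S$. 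You instead invoke Nagata's criterion after observing that $S[u^{-1}]=R[u,u^{-1}]$ is a UFD and that $u$ is prime by Proposition~\ref{np1}, which shifts all the work onto a self-contained verification of ACCP in $S$: stabilization in $S[u^{-1}]$ forces the successive ratios to be $ru^{e}$ with $r\in R^{\times}$ and $e\ge 1$, so a non-terminating chain would produce a nonzero element divisible by every power of $u$, which your iterated version of Lemma~\ref{nl4} excludes via $\bigcap_n(f^n)=0$ in the UFD $R$. Your approach buys a more standard, conceptual structure (localize at a prime, apply Nagata) at the cost of quoting Nagata's lemma, and your explicit ACCP argument is actually more detailed than the paper's one-line transfer of ACCP from $R[u,v]$ to its quotient, a step that the paper does not justify and that is not automatic for general quotient domains; the paper's approach is more elementary and self-contained, working entirely with the graded structure and the subring $R[u]$. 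Both arguments are valid at the stated level of generality of arbitrary domains.
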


\begin{proof}
Assume that $S$ is factorial. By Lemma~\ref{nl2}, the element $u$ is irreducible. By factoriality of $S$, the element $u$ is prime. Proposition~\ref{np1} implies that
$f$ is prime in $R$.

Let us show that any ascending chain of principal ideals in $R$ terminates. Consider such a chain $Ra_1\subseteq Ra_2\subseteq\ldots$.  Since $S$ is factorial, the corresponding chain $Sa_1\subseteq Sa_2\subseteq\ldots$ of principal ideals in $S$ terminates. But if the principal ideals $Sa_i$ and $Sa_{i+1}$ coincide, then the elements $a_i$ and $a_{i+1}$ are associated in $S$. By Lemma~\ref{nl1}, the elements $a_i$ and $a_{i+1}$ are associated in $R$ as well, and the original chain in $R$ terminates.

It remains to check that any irreducible element $a\in R$ is prime in $R$. If $a$ is associated with $f$, then the assertion follows from Proposition~\ref{np1}.  Let us assume that $a$ is not associated with $f$. By Lemma~\ref{nl3}~(ii), $a$ is irreducible in $S$, and so it is prime in $S$. If $a$ divides $bc$ with $b,c\in R$, then we may assume that $a$ divides $b$ in $S$. Then $a$ divides $b$ in $R$, and so $a$ is prime in $R$.

\medskip

Assume now that $R$ is factorial and $f$ is prime in $R$. Then the polynomial ring $R[u,v]$ is factorial as well. So, any ascending chain of principal ideals in $R[u,v]$ and in its factor ring $S$ terminates.

It remains to prove that any irreducible $g\in S$ is prime. If $u$ divides $g$, then $g$ is associated with $u$. In this case $g$ is prime by Proposition~\ref{np1}. Further we assume that $u$ does not divide~$g$.

\smallskip

Take the smallest non-negative integer $l$ such that $u^lg\in R[u]$. 

\smallskip

{\it Step 1}.\ Let us show that the element $u^lg$ is prime in $R[u]$. Since $R[u]$ is UFD, it suffices to prove that $u^lg$ is irreducible in $R[u]$.

Let $u^lg=p_1p_2$ with $p_1,p_2\in R[u]$. By Proposition~\ref{np1}, the element $u$ is prime in $S$. Since $g$ is irreducible in $S$, after dividing the right-hand side by $u^l$ we cannot obtain a product of two non-invertible elements. So we may assume that $p_1$ is associated in $S$ with $u^t$ for some $0\le t\le l$. This means that $p_1=\lambda u^t$, where $\lambda\in S^{\times}=R^{\times}$. If $t>0$ then
$
u^{l - t} g = \lambda p_2 \in R[u],
$
a~contradiction with minimality of $l$. So $t=0$ and $p_1\in R^{\times}$. This shows that $u^lg$ is prime in~$R[u]$.

\smallskip

{\it Step 2}.\ Let us prove that $g$ is prime in $S$. Let $h_1h_2=gs$ with $h_1,h_2,s\in S$. Multiplying both sides by a suitable power of $u$, we obtain $h_1'h_2'=u^lgs'$, where $h_1',h_2',s'\in R[u]$. Since the element $u^lg$ is prime in $R[u]$, we may assume that $h_1'=u^lgp$ with $p\in R[u]$. By construction, we have $h_1'=u^rh_1$ with $r\in\ZZ_{\ge 0}$.

\smallskip

If $r\le l$ then $h_1=u^{l-r}gp$, so $g$ divides $h_1$ in $S$.

\smallskip

If $r>l$ then $u^{r-l}h_1=gp$. Since $u$ does not divide $g$ and $u$ is prime in $S$, we conclude that $u^{r-l}$ divides $p$ in $S$. So $h_1=\frac{p}{u^{r-l}}g$, and $g$ divides $h_1$ in $S$.

\smallskip

This proves that $g$ is prime in $S$. The proof of Theorem~\ref{tmain} is completed.
\end{proof}

\begin{remark}
Let $A=\oplus_{i\in\ZZ} A_i$ be a $\ZZ$-graded domain. We say that a nonzero non-invertible homogeneous element $a\in S$ is $\ZZ$-prime if $a$ divides a product $gh$ of homogeneous elements if and only if either $a$ divides $g$ or $a$ divides $h$. Further, $A$ is said to be factorially graded if any nonzero non-invertible homogeneous element in $A$ is a product of $\ZZ$-primes.

Under some restrictions on the domain $A$ it is known that $A$ is  factorially graded if and only if $A$ is UFD; see~\cite{An} or \cite[Theorem~3.4.1.11~(ii)]{ADHL}. This result can simplify the proof of Theorem~\ref{tmain} at the expense of loss of generality.
\end{remark}


\section{Homogeneous varieties vs homogeneous spaces}
\label{sec3}


Let us start with a direct corollary of Theorem~\ref{tmain} and the criterion of smoothness of suspensions~\cite[Corollary~1]{AZa}.

\begin{theorem} \label{tsec}
Let $Y$ be a flexible affine variety. Then a suspension $\Susp(Y,f)$ is a smooth flexible affine factorial variety if and only if $Y$ is factorial and smooth, the function $f$ is a prime element in $\KK[Y]$, and $\{f=0\}$ is a smooth subvariety in $Y$.
\end{theorem}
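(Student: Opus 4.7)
The plan is to assemble three already-available ingredients: Theorem~\ref{tmain} of this paper, which controls factoriality; the smoothness criterion \cite[Corollary~1]{AZa}, which controls smoothness of the suspension; and \cite[Theorem~0.2(3)]{AKZ}, which transfers flexibility from $Y$ to $\Susp(Y,f)$. Since $\Susp(Y,f)$ is by definition the hypersurface $\{uv-f=0\}$ in $\AA^2\times Y$, its coordinate ring is precisely $S=R[u,v]/(uv-f)$, where $R=\KK[Y]$, so Theorem~\ref{tmain} applies verbatim in this geometric setting.

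For the forward direction, I would assume that $X=\Susp(Y,f)$ is smooth, flexible, and factorial. Smoothness of $X$ together with \cite[Corollary~1]{AZa} immediately forces $Y$ to be smooth and the closed subscheme $\{f=0\}\subseteq Y$ to be smooth. Factoriality of $X$ is the same as $S$ being a UFD, so Theorem~\ref{tmain} yields that $R=\KK[Y]$ is a UFD, i.e.\ $Y$ is factorial, and that $f$ is prime in $\KK[Y]$. Note that $f$ is automatically nonzero and non-invertible: if $f$ were a unit, replacing $v$ by $fv$ would identify $X$ with $Y\times\KK^{\times}$, so $\{f=0\}$ would be empty and the claim would be trivial, while nonvanishing is built into the definition of a suspension via the requirement that $f$ be a non-constant regular function.

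For the converse, I would suppose that $Y$ is smooth, flexible, and factorial, that $f$ is prime in $\KK[Y]$, and that $\{f=0\}$ is a smooth subvariety. Primeness of $f$ guarantees $f$ is nonzero and non-invertible, so Theorem~\ref{tmain} applies and gives that $S=\KK[X]$ is a UFD, hence $X$ is factorial. The smoothness criterion \cite[Corollary~1]{AZa} gives that $X$ is smooth. Finally, flexibility of $Y$ together with \cite[Theorem~0.2(3)]{AKZ} gives flexibility of $X$, which completes the proof.

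There is no real obstacle here: the result is a direct synthesis of three pre-existing criteria, which is why the authors introduce it as a corollary rather than a theorem requiring new ideas. The only points that demand any care are the trivial bookkeeping around excluding the degenerate case $f\in R^{\times}$ and recalling that factoriality of the affine variety $X$ is the same as $\KK[X]$ being a UFD, both of which are essentially automatic.
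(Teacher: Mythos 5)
Your proposal is correct and matches the paper's intent exactly: the paper gives no written proof, introducing Theorem~\ref{tsec} as ``a direct corollary of Theorem~\ref{tmain} and the criterion of smoothness of suspensions~\cite[Corollary~1]{AZa},'' with flexibility of the suspension supplied by \cite[Theorem~0.2(3)]{AKZ} as recalled in the introduction. Your assembly of these three ingredients, including the bookkeeping that $f$ must be nonzero and non-invertible for Theorem~\ref{tmain} to apply, is precisely the intended argument.
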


Theorem~\ref{tsec} provides an iterative procedure for constructing a large number of homogeneous affine factorial varieties in each dimension starting from $3$.

\smallskip

From now until the end of this section we assume that the ground field $\KK$ is the field of complex numbers $\CC$.

\begin{proposition} \label{psc}
Let an irreducible normal affine variety $Y$ be simply connected and $f$ be a non-invertible nonzero regular function on $Y$. Then the suspension $\Susp(Y,f)$ is simply connected.
\end{proposition}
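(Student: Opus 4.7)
The plan is to cover $X=\Susp(Y,f)$ by the two open subsets $U=\{u\neq 0\}$ and $V=\{v\neq 0\}$, apply van Kampen's theorem to their union, and then pass to $\pi_1(X)$ by combining a codimension argument with an explicit null-homotopy. Set $D=\{f=0\}\subset Y$ and $\tilde D=\{(0,0,y):y\in D\}\subset X$.

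First I would observe that the relation $uv=f(y)$ lets us solve $v=f(y)/u$ on $U$, so $(u,v,y)\mapsto(y,u)$ gives an isomorphism $U\cong Y\times\CC^*$, and symmetrically $V\cong Y\times\CC^*$. On the overlap, $f(y)=uv\neq 0$, giving $U\cap V\cong (Y\setminus D)\times\CC^*$. Since $f$ is non-invertible, $D$ is nonempty by the Nullstellensatz, and since $Y$ is irreducible the complement $Y\setminus D$ is irreducible and therefore path-connected in the classical topology; thus Seifert--van Kampen applies. Using $\pi_1(Y)=1$, we get $\pi_1(U)=\pi_1(V)=\ZZ$ with generators $a,b$ coming from the two $\CC^*$-factors, so $\pi_1(U\cup V)$ is generated by the images of $a$ and $b$.

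Next, $U\cup V=X\setminus\tilde D$, and $\tilde D$ has complex codimension $2$ in $X$ since $\dim D=\dim Y-1$. The inclusion $X\setminus\tilde D\hookrightarrow X$ therefore induces a surjection on $\pi_1$: this is a standard general-position fact for complex analytic spaces, following from the observation that any one-dimensional loop can be pushed off a closed analytic subset of positive complex codimension (using, for instance, a triangulation of $X$ in which $\tilde D$ is a subcomplex). Consequently $\pi_1(X)$ itself is generated by the images of $a$ and $b$, and it suffices to prove each of them is null-homotopic in $X$.

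For the null-homotopy of $a$, I would pick $y_0\in Y\setminus D$, pick $y_1\in D$, and a continuous path $y_t$ joining them (possible since $Y$ is path-connected). The map $H(\theta,t)=(e^{i\theta},f(y_t)e^{-i\theta},y_t)$ satisfies $uv=f(y_t)=f(y)$ and hence lands in $X$. At $t=0$ it represents $a$, and at $t=1$ it gives $\theta\mapsto(e^{i\theta},0,y_1)$, a loop contained in the line $\{(u,0,y_1):u\in\CC\}\subset X$, which is visibly contractible there via $(s,\theta)\mapsto(se^{i\theta},0,y_1)$; note that this final contraction passes through the point $(0,0,y_1)\in\tilde D$, which is harmless because we are null-homotoping in $X$, not in $X\setminus\tilde D$. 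A symmetric argument handles $b$, completing the proof. The main point requiring care is the codimension-two surjectivity step for a potentially singular $X$; I would treat it via the simplicial triangulation/transversality statement mentioned above, which holds for arbitrary complex analytic spaces.
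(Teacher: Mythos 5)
Your overall strategy is sound, and your concluding null-homotopy is essentially the paper's own trick: the paper works with the single chart $X_u\cong \AA^1_*\times Y$, chooses the base point $y_0$ directly on $\{f=0\}$ so that the generating loop already lies on the contractible line $\{(u,0,y_0)\}$, and avoids van Kampen altogether. However, the step you yourself flag as ``the main point requiring care'' is a genuine gap. The assertion that for an \emph{arbitrary} complex analytic space $X$ and a closed analytic subset $\tilde D$ of complex codimension $\ge 2$ the inclusion $X\setminus\tilde D\hookrightarrow X$ induces a surjection on $\pi_1$ is false, and the triangulation/general-position argument does not establish it: general position lets you push a loop off a codimension-$\ge 2$ subcomplex of a PL \emph{manifold}, not of an arbitrary complex (in two planes meeting at a point, no path joining the two sheets can avoid the intersection point, which has real codimension $4$). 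A concrete irreducible counterexample to your claimed general fact: let $X'$ be the affine surface obtained from $\AA^2$ by identifying two points $p\ne q$; then $\pi_1(X')\cong\ZZ$, while $X'\setminus\{\text{node}\}\cong\AA^2\setminus\{p,q\}$ is simply connected, so the map on $\pi_1$ is not surjective even though the removed set has complex codimension $2$. This is relevant because the points of your $\tilde D$ are in general singular points of $X$ (they are whenever $\{f=0\}$ is singular, which the hypotheses allow), so you cannot treat $X$ as a manifold near $\tilde D$.

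The missing ingredient is precisely the normality hypothesis on $Y$, which your argument never uses: by Proposition~\ref{norm} the suspension $X$ is again normal, and for an irreducible \emph{normal} complex variety the inclusion of any dense Zariski open subset induces a surjection of fundamental groups --- this is \cite[Proposition~2.10.1]{Kol}, the result the paper invokes. Substituting this for your transversality claim closes the gap, and the rest of your proof (the explicit homotopy $H(\theta,t)=(e^{i\theta},f(y_t)e^{-i\theta},y_t)$ dragging the generator onto the line $\{(u,0,y_1)\}$, and its mirror image for $v$) is correct. Alternatively, as in the paper, one can apply the same surjectivity result to the single dense open set $X_u$, whose fundamental group is already cyclic, and dispense with the second chart and van Kampen entirely.
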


\begin{proof}
By~\cite[Theorem~7.1]{Sha}, any irreducible complex algebraic variety is connected in the complex topology.  The variety $X=\Susp(Y,f)$  is given by $uv=f(y)$. Consider the principal open subset $X_u$ defined by $u\ne 0$. The variety $X_u$ is isomorphic to $\AA^1_*\times Y$, $(u,v,y)\mapsto (u,y)$, and its fundamental group is generated by the loop $(l(t),y_0)$, $t\in S^1$, where $l(t)$ is a circle around the punctured point on $\AA^1_*$, and $y_0\in Y$. One may choose $y_0$ such that $f(y_0)=0$.

By Proposition~\ref{norm}, the variety $X$ is normal, and by~\cite[Proposition~2.10.1]{Kol}, the inclusion $X_u\subseteq X$ induces  a surjection of fundamental groups. So the fundamental group of $X$ is generated by the class of the image of the loop $(l(t),y_0)$. This loop maps to $(l(t), 0, y_0)$, since $v=f(y_0)/u=0$. So it is contained in the subvariety $(u, 0, y_0)$, which is isomorphic to $\AA^1$, hence this loop is contractible. We conclude that $X$ is simply connected.
\end{proof}

More information on topology of suspensions over $Y=\CC^n$ can be found~\cite[Section~4]{KZ}. Let us formulate some of these results that we use below.

Let $X$ be a topological space. First we recall relations between unreduced and reduced singular homology $H_k(X)$ and $\tilde{H}_k(X)$, $k \in \ZZ_{\ge 0}$, with coefficients in $\ZZ$. We have
$$
H_k(X) \cong \tilde{H}_k(X)  \ \text{for} \  k \ge 1
$$
and
$$
H_0(X) \cong \tilde{H}_0(X) \oplus \bb{Z}.
$$
Denote by $\pi_0(X)$ the set of path-connected components of $X$. Then
$$
H_0(X) \cong \bb{Z}[\pi_0(X)],
$$
where $\bb{Z}[A]$ for a set $A$ denotes the free abelian group with basis $A$.

Let $(X, x)$ be a pointed topological space. For each $k \in \ZZ_{\ge 0}$ there is a Hurewicz homomorphism
$$
\Phi_k \colon \pi_k(X, x) \to H_k(X).
$$
The Hurewicz Theorem states the following. If $X$ is $(k - 1)$-connected for some $k \in \ZZ_{>0}$, then $\Phi_k$ induces an isomorphism
$$
\pi_k^\mathrm{ab}(X, x) \cong H_k(X),
$$
where $\pi_k^\mathrm{ab}(X, x)$ is the abelianization of the homotopy group $\pi_k(X, x)$. Note that for $k \ge 2$, since the group $\pi_k(X, x)$ is abelian, we obtain an isomorphism $\pi_k(X, x) \cong H_k(X)$.

\smallskip

Consider a suspension $X = \Susp(\CC^n, f)$ for some non-constant $f \in \KK[\CC^n]$. Assume that the hypersurface
$$
X_0 = \Spec \KK[\CC^n] / (f)
$$
in $\CC^n$ is reduced, irreducible and smooth. By~\cite[Proposition~4.1]{KZ}, there are isomorphisms
\begin{equation} \tag{$*$} \label{iso}
\tilde{H}_*(X) \cong \tilde{H}_{* - 2}(X_0).
\end{equation}
We have
$$
\tilde{H}_0(X) = \tilde{H}_0(X_0)=0.
$$
Proposition~\ref{psc} implies the equality $\pi_1(X) = 0$, hence
$$
H_1(X) \cong \pi_1^\mathrm{ab}(X) = 0.
$$
From~(\ref{iso}) we obtain
$$
H_2(X) = 0  \quad \text{and}  \quad H_3(X) \cong H_1(X_0).
$$
In particular, $\pi_2(X) = 0$ (so $X$ is actually always $2$-connected) and
$$
\pi_3(X) \cong H_3(X) \cong H_1(X_0) \cong
\pi_1^\mathrm{ab}(X_0).
$$

We are ready to come to concrete examples and to construct a homogeneous affine factorial variety that is not a homogeneous space.

The only one-dimensional affine factorial variety $Y$ without non-constant invertible functions is the affine line $\CC^1$. The only irreducible polynomials $f$ on $\CC^1$ are linear functions. So the only smooth affine factorial surface $X$ that can be obtained as $\Susp(Y, f)$ with flexible $Y$ is the affine plane $\CC^2$. The plane is homogeneous with respect to several actions of linear algebraic groups.

The situation with flexible affine factorial 3-folds is more diverse.

\begin{theorem} \label{texa}
The hypersurface $x^2y-xy-uv+1=0$ in $\CC^4$ is a homogeneous affine factorial 3-fold that is not a homogeneous space of an algebraic group.
\end{theorem}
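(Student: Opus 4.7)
The plan is to realize $X$ as the suspension $\Susp(\CC^2, f)$ with $f = (x-1)xy + 1$, apply Theorem~\ref{tsec} to see that it is a smooth flexible affine factorial $3$-fold, and then use $\pi_3(X)$ as a topological invariant to rule out each of the three candidates listed in~\cite{Po-2}.

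First I would verify the hypotheses of Theorem~\ref{tsec} for $Y = \CC^2$ (which is already known to be smooth, flexible and factorial). For primality of $f$ in $\CC[x,y]$: viewing $f = (x^2 - x) y + 1$ as a polynomial in $y$ of degree~$1$ whose content $\gcd(x^2-x, 1) = 1$, Gauss's Lemma shows $f$ is irreducible, and the UFD $\CC[x,y]$ then gives primality. For smoothness of $\{f = 0\}$ in $\CC^2$: the partial derivatives are $\partial_x f = y(2x-1)$ and $\partial_y f = x(x-1)$, and their common zero set forces $x \in \{0,1\}$, where $f = 1 \neq 0$; so no singular points of the hypersurface lie on $\{f=0\}$. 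Theorem~\ref{tsec} then implies that $X$ is smooth, flexible and factorial, and hence homogeneous. By Lemma~\ref{nl1} we have $\CC[X]^\times = \CC[x,y]^\times = \CC^\times$, so $X$ has no non-constant invertible regular functions.

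By \cite[Theorem~2 and Proposition~19]{Po-2}, to conclude that $X$ is not a homogeneous space of an algebraic group it suffices to show $X \not\cong \CC^3$, $X \not\cong \SL(2)$, and $X \not\cong \SL(2)/2I$. I would compute $\pi_3(X)$ using the framework set up just before the statement of the theorem. By Proposition~\ref{psc}, $X$ is simply connected, so the Hurewicz theorem together with the isomorphism $\tilde H_*(X) \cong \tilde H_{*-2}(X_0)$ from \cite[Proposition~4.1]{KZ} yields
$$
\pi_3(X) \cong H_3(X) \cong H_1(X_0) \cong \pi_1^{\mathrm{ab}}(X_0),
$$
where $X_0 = \{f = 0\} \subset \CC^2$. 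Solving the equation for $y$, one parametrizes $X_0$ by $x \mapsto \bigl(x, -1/(x(x-1))\bigr)$, giving an isomorphism $X_0 \cong \CC \setminus \{0, 1\}$. Its fundamental group is free of rank~$2$, so $\pi_1^{\mathrm{ab}}(X_0) \cong \ZZ^2$ and therefore $\pi_3(X) \cong \ZZ^2$.

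Finally I would compare: $\CC^3$ is contractible so $\pi_3(\CC^3) = 0$; the complex Lie group $\SL(2,\CC)$ deformation retracts to its maximal compact $SU(2) \cong S^3$, giving $\pi_3(\SL(2)) = \ZZ$; and since $2I \subset SU(2)$, the same retraction descends to show $\SL(2)/2I$ deformation retracts to the finite quotient $SU(2)/2I$, which is covered by $S^3$ and hence also satisfies $\pi_3 = \ZZ$. Since $\ZZ^2$ is isomorphic to none of $0$, $\ZZ$, $\ZZ$, the variety $X$ is not homotopy equivalent, and in particular not isomorphic, to any of the three. The main obstacle I anticipate is the clean execution of the topological step—especially justifying the $\SL(2)/2I$ deformation retract and confirming that the isomorphism $\tilde H_*(X) \cong \tilde H_{*-2}(X_0)$ from~\cite{KZ} applies here; the algebraic verifications are essentially immediate.
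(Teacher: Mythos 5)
Your proposal is correct and follows essentially the same route as the paper: realize $X$ as $\Susp(\CC^2,(x-1)xy+1)$, apply Theorem~\ref{tsec} for smoothness, flexibility and factoriality, and then compute $\pi_3(X)\cong\ZZ^2$ via the isomorphism $\tilde H_*(X)\cong\tilde H_{*-2}(X_0)$ and $X_0\cong\CC\setminus\{0,1\}$ to exclude the three homogeneous spaces from~\cite{Po-2}. The only (harmless) variations are that you spell out the routine verifications of the hypotheses of Theorem~\ref{tsec} and the absence of non-constant units, and that you rule out $\SL(2)/2I$ by $\pi_3$ (via the covering $S^3\to SU(2)/2I$) where the paper instead uses simple connectedness from Proposition~\ref{psc}.
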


\begin{proof}
The hypersurface $x^2y-xy-uv+1=0$ is the suspension
$$
X = \Susp(\CC^2, \ (x - 1) x y + 1).
$$
By Theorem~\ref{tsec},  $X$  is a smooth flexible affine factorial variety. In particular, $X$ is homogeneous.

It follows from~\cite[Theorem~2 and Proposition~19]{Po-2} that there are precisely three pairwise non-isomorphic affine factorial 3-folds  without non-constant invertible functions that are homogeneous spaces of algebraic groups: they are $\CC^3$, $\SL(2)$ and $\SL(2)/2I$, where $2I$ is the binary icosahedral group. The last group has order~$120$ and is the universal perfect central extension of the simple group $A_5$.

By Proposition~\ref{psc}, the hypersurface $X$ is simply connected, so it is not isomorphic to $\SL(2)/2I$.  Moreover, we know that $\pi_2(X)=0$ and $\pi_3(X)\cong \pi_1^\mathrm{ab}(X_0)$, where $X_0$ is the curve $\{(x-1)xy+1=0\}$ in $\CC^2$. This curve is isomorphic to $\CC\setminus \{0,1\}$, so the group  $\pi_1^\mathrm{ab}(X_0)$ is the lattice $\ZZ^2$. We conclude that $X$ is not isomorphic to $\CC^3$, since the last variety is contractible. Moreover, the variety $\SL(2)$ is homotopic to the real $3$-sphere, so the group $\pi_3(\SL(2))$ is isomorphic to $\ZZ$. This proves that $X$ is not a homogeneous space of an algebraic group.
\end{proof}

It is an important problem to find algebraic invariants that would allow to distinguish between non-isomorphic smooth flexible affine factorial varieties of the same dimension.


\section{The divisor class group}
\label{sec4}


Let us compute the divisor class group $\Cl(X)$ of a suspension $X = \Susp(Y, f)$ over a normal affine variety $Y$ for an arbitrary non-constant function~$f$. This gives an alternative proof of Theorem~\ref{tmain} in geometric settings.

We begin with some notations. Let $X$ be a variety and let $g \in \bb{K}[X]$. Denote by $X_g$ the principal open subset of $X$ defined by $g \ne 0$. Let $\bb{V}_X(g)$ be the closed subvariety of $X$ given by the equation $g = 0$. In other words, $\bb{V}_X(g) = X \setminus X_g$.

Let $X$ be an irreducible normal variety and $g \in \bb{K}(X)^{\times}$. Denote by $\WDiv(X)$ the group of Weil divisors on $X$ and by $\div_X(g) \in \WDiv(X)$ the principal divisor of $g$. For a prime divisor $D$ on $X$, denote by $\nu_D(g)$ the order of zero/pole of $g$ along $D$. In other words,
$$
\div_X(g) =
\sum_{\substack{D \subset X \\ \text{prime}}} \nu_D(g) \cdot D.
$$

Let $U \subseteq X$ be an open subset. There is a surjective homomorphism $\phi \colon \Cl(X) \to \Cl(U)$ induced by the map
$$
\bar{\phi} \colon \WDiv(X) \to \WDiv(U), \quad
D \mapsto D \cap U,
$$
where $D \in \WDiv(X)$ is a prime divisor and if $D \subseteq X \setminus U$, then we mean $D \cap U = 0$. If $D_1, \dots, D_s$, $s \ge 0$, is the set of prime divisors on $X$ contained in $X \setminus U$, then there is an exact sequence
$$
\begin{tikzcd}
\bb{Z}^s \ar{r}{\psi} & \Cl(X) \ar{r}{\phi} & \Cl(U) \ar{r} & 0
\end{tikzcd}
,
$$
where $\psi$ is the map
$$
(a_1, \dots, a_s) \mapsto a_1 [D_1] + \ldots + a_s [D_s].
$$
We say that the maps $\phi$ and $\psi$ are induced by the open embedding $U \subseteq X$.

Note also that there is an isomorphism
$$
\theta_X \colon \Cl(X) \cong \Cl(\bb{A}^1_* \times X)
$$
induced by the map
$$
\bar{\theta}_X \colon
\WDiv(X) \to \WDiv(\bb{A}^1_* \times X), \quad
D \mapsto \bb{A}^1_* \times D.
$$

From now on in this section we assume that $Y$ is an irreducible normal affine variety, $f \in \bb{K}[Y] \setminus \{0\}$, and $X = \Susp(Y, f) \subset \bb{A}^2_{u, v} \times Y$. Proposition~\ref{norm} shows that $X$ is normal as well.

Note that if $u \ne 0$, then $v = \frac{f}{u}$ in $X$, hence $X_u \cong \bb{A}^1_* \times Y$. Conversely, if $u = 0$, then $f = 0$, hence $\bb{V}_X(u) \cong \bb{A}^1 \times \bb{V}_Y(f)$. By symmetry, there are similar isomorphisms for $v$.

Denote by $E_1, \dots, E_s$, $s \ge 0$, the set of prime divisors on $Y$ contained in $\bb{V}_Y(f)$. If we denote by $D_i$ the subvariety $\bb{A}^1 \times E_i \subseteq \bb{V}_X(u)$, then $D_1, \dots, D_s$ is the set of prime divisors on $X$ contained in $\bb{V}_X(u)$. Let
$$
\div_Y(f) = a_1 E_1 + \ldots + a_s E_s \in \WDiv(Y),
$$
where each $a_i$ is a positive integer.

\begin{lemma} \label{PDu.le}
The following equality holds:
$$
\div_X(u) = a_1 D_1 + \ldots + a_s D_s \in \WDiv(X).
$$
\end{lemma}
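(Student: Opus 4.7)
The plan is to compute, for each $i$, the valuation $\nu_{D_i}(u)$ at the DVR $\OOO_{X, D_i}$ by a local computation at the generic point of $E_i$, and verify it equals $a_i$. Since $u \in \KK[X]$ is a regular function, $\div_X(u)$ is effective with support in $\bb{V}_X(u) = D_1 \cup \cdots \cup D_s$, so one can write $\div_X(u) = \sum_{i=1}^{s} b_i D_i$ with $b_i = \nu_{D_i}(u) \ge 0$, and the problem reduces to showing $b_i = a_i$.

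For the local computation, let $A := \OOO_{Y, E_i}$, which is a DVR by normality of $Y$, with uniformizer $t$, and write $f = t^{a_i} w$ for a unit $w \in A^{\times}$ (by the definition of $a_i = \nu_{E_i}(f)$). The base change of $X$ over $\Spec A$ is $\Spec B$ for $B = A[u,v]/(uv - t^{a_i} w)$. A direct check shows that $\mathfrak{p} := (u, t) \subset B$ is prime, since $B/\mathfrak{p} \cong (A/tA)[v]$ is a domain, and that it corresponds to the generic point of $D_i$ (its vanishing locus is $\{u = 0\} \cap \pi^{-1}(E_i) = D_i$). By Proposition~\ref{norm} the variety $X$ is normal, hence $R_i := \OOO_{X, D_i} = B_\mathfrak{p}$ is a DVR. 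In $R_i$ the element $v$ is a unit (as $v \notin \mathfrak{p}$), so the defining relation $uv = t^{a_i} w$ yields $u = t^{a_i} w v^{-1}$, whence
$$
\mathfrak{p} R_i = (u, t)\, R_i = (t)\, R_i,
$$
i.e., $t$ is a uniformizer of $R_i$. Consequently $\nu_{D_i}(u) = \nu_{R_i}(t^{a_i} w v^{-1}) = a_i$, which is the desired equality.

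The only delicate point is the identification of the local structure: showing that $\mathfrak{p} = (u, t)$ is the prime corresponding to $D_i$ and that $t$ remains a uniformizer after passing to $R_i$. The key observation is that once $v$ becomes invertible in $R_i$, the relation $uv = f$ rigidly forces $u$ into $(t) R_i$ with the correct multiplicity, so no ramification or multiplicity correction is needed. Given this, the valuation computation is immediate from $uv = f$, and summing over $i$ gives the claimed equality of Weil divisors.
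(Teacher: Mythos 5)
Your proof is correct, but it takes a genuinely different route from the paper's. The paper's argument is a three-line valuation computation: since $\bb{V}_X(v)\cap\bb{V}_X(u)$ has codimension $2$, the divisor $D_i$ is not contained in $\bb{V}_X(v)$, so $\nu_{D_i}(v)=0$, and then $\nu_{D_i}(u)=\nu_{D_i}(u)+\nu_{D_i}(v)=\nu_{D_i}(uv)=\nu_{D_i}(f)=a_i$. You instead identify the local ring $\OOO_{X,D_i}$ explicitly as the localization of $B=\OOO_{Y,E_i}[u,v]/(uv-t^{a_i}w)$ at $(u,t)$ and show that $t$ stays a uniformizer because $v$ is a unit there. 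Your version is longer but has a real virtue: it justifies the step $\nu_{D_i}(f)=a_i$, which the paper uses implicitly. A priori the restriction of $\nu_{D_i}$ to $\KK(Y)$ is $e\cdot\nu_{E_i}$ for some ramification index $e\ge 1$, and your computation $\mathfrak{p}R_i=(t)R_i$ is exactly what shows $e=1$; the paper's proof silently absorbs this (it is of course clear from the product structure $D_i=\AA^1\times E_i$, but it is a step). Conversely, the paper's observation that $\nu_{D_i}(v)=0$ by the codimension-$2$ argument is the slicker way to package the fact that $v$ is a unit at the generic point of $D_i$, which is also the key point in your localization. Both proofs rely on normality of $X$ (Proposition~\ref{norm}) to make $\OOO_{X,D_i}$ a DVR, and both reduce to the single relation $uv=f$; yours just carries out the bookkeeping at the level of explicit local rings.
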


\begin{proof}
Fix $1 \le i \le s$. Clearly, the divisor $\div_X(u)$ is supported on $D_1\cup\ldots\cup D_s$, so it suffices to show that $\nu_{D_i}(u) = a_i$. We have $\nu_{D_i}(v) = 0$, since $\bb{V}_X(v)$ intersects $\bb{V}_X(u)$ in a subvariety of codimension $2$ in $X$. Therefore, we obtain
\[
a_i = \nu_{D_i}(f) = \nu_{D_i}(u v) =
\nu_{D_i}(u) + \nu_{D_i}(v) = \nu_{D_i}(u).
\qedhere
\]
\end{proof}

\begin{proposition} \label{seq}
There is an exact sequence
$$
\begin{tikzcd}
0 \ar{r} & \bb{Z} \ar{r}{\xi} & \bb{Z}^s \ar{r}{\psi} &
\Cl(X) \ar{r}{\phi'} & \Cl(Y) \ar{r} & 0
\end{tikzcd}
,
$$
where $\phi' = \theta_{Y}^{-1} \circ \phi$, the maps $\phi$ and $\psi$ are induced by the open embedding $X_u \subseteq X$, and $\xi$ is defined by $1 \mapsto (a_1, \dots, a_s)$.
\end{proposition}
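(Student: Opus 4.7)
The plan is to apply the general excision exact sequence for the open embedding $X_u \subseteq X$ and then identify the surrounding groups. As noted just before Lemma~\ref{PDu.le}, the prime divisors of $X$ contained in $X \setminus X_u = \bb{V}_X(u)$ are exactly $D_1, \ldots, D_s$, so the excision sequence recalled at the start of this section yields exactness of
$$
\ZZ^s \xrightarrow{\psi} \Cl(X) \xrightarrow{\phi} \Cl(X_u) \to 0.
$$
Combining the isomorphism $X_u \cong \AA^1_* \times Y$, $(u, v, y) \mapsto (u, y)$, with $\theta_Y \colon \Cl(Y) \cong \Cl(\AA^1_* \times Y)$ turns $\phi$ into the surjection $\phi' = \theta_Y^{-1} \circ \phi \colon \Cl(X) \to \Cl(Y)$, establishing exactness at $\Cl(X)$ and $\Cl(Y)$.

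For exactness at $\ZZ^s$, I take $(b_1, \ldots, b_s) \in \ker \psi$, so that $\sum b_i D_i = \div_X(h)$ for some $h \in \KK(X)^\times$. Since the support of $\div_X(h)$ lies inside $\bb{V}_X(u)$, the restriction $h|_{X_u}$ is a unit in $\KK[X_u] \cong \KK[Y][u, u^{\pm 1}]$. Because $\KK[Y]$ is a domain, a comparison of top and bottom degrees in $u$ shows that every unit of this Laurent polynomial ring has the form $c u^n$ with $c \in \KK[Y]^\times$ and $n \in \ZZ$. Thus $h = c u^n$ in $\KK(X)^\times$ for some such $c$ and $n$. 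Since $c$ is nowhere vanishing on $Y$ (hence on $X$), we have $\div_X(c) = 0$, and Lemma~\ref{PDu.le} yields
$$
\sum b_i D_i = \div_X(h) = n\,\div_X(u) = n \sum a_i D_i,
$$
so $(b_1, \ldots, b_s) = n(a_1, \ldots, a_s) = \xi(n)$. Conversely, $\xi(1) \in \ker \psi$ by Lemma~\ref{PDu.le}, giving $\ker \psi = \Im \xi$.

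Finally, because $f$ is nonzero and non-invertible in $\KK[Y]$, at least one coefficient $a_i$ is positive, so $(a_1, \ldots, a_s) \ne 0$ and $\xi$ is injective. The main obstacle is the middle step: narrowing down the rational functions whose divisors are supported on $\bb{V}_X(u)$ to a power of $u$ times a unit from the base, which hinges on the elementary but essential description of units in a Laurent polynomial ring over a domain.
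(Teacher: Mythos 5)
Your proof is correct and takes essentially the same route as the paper: the excision sequence for the open embedding $X_u \subseteq X$, the identification $\Cl(X_u) \cong \Cl(\AA^1_* \times Y) \cong \Cl(Y)$, and Lemma~\ref{PDu.le} to show $\ker\psi = \Im\xi$. The only cosmetic difference is in pinning down a function invertible on $X_u$: you compute the units of $\KK[X_u] \cong \KK[Y][u,u^{-1}]$ by an elementary degree comparison, while the paper invokes Rosenlicht's theorem on units of a product (\cite[Proposition~1.1]{KKV}); both yield $h = c\,u^n$ with $c$ a unit, after which the arguments coincide.
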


\begin{proof}
Clearly, the sequences
$$
\begin{tikzcd}
0 \ar{r} & \bb{Z} \ar{r}{\xi} & \bb{Z}^s
\end{tikzcd}
\quad \text{and} \quad
\begin{tikzcd}
\bb{Z}^s \ar{r}{\psi} & \Cl(X) \ar{r}{\phi} &
\Cl(X_u) \ar{r} & 0
\end{tikzcd}
$$
are exact, so it suffices to show that $\Im \xi = \Ker \psi$. By Lemma~\ref{PDu.le} we have $\Im \xi \subseteq \Ker \psi$. Consider $(b_1, \dots, b_s) \in \Ker \psi$, that is,
$$
b_1 D_1 + \ldots + b_s D_s = \div_X(g)
$$
for some $g \in \bb{K}(X)^{\times}$. It follows that $g$ is regular and invertible on $X \setminus \bb{V}_X(u) = X_u \cong \bb{A}^1_* \times Y$. By~a result of Rosenlicht, see \cite[Proposition~1.1]{KKV}, there exist $p \in \bb{K}[\bb{A}^1_*]^\times$ and $q \in \bb{K}[Y]^\times$ such that $g = p q$. We may assume that $p = u^k$ for some $k \in \bb{Z}$ and $q$ is a regular invertible function on~$X$. Therefore,
$$
\div_X(g) = \div_X(p) + \div_X(q) = \div_X(u^k) + 0 =
k \cdot (a_1 D_1 + \ldots + a_s D_s),
$$
so $(b_1, \dots, b_s) \in \Im \xi$.
\end{proof}

\begin{corollary}
Let $Y$ be an irreducible normal affine variety and $f$ be a nonzero non-invertible element in $\KK[Y]$. Then the variety $X=\Susp(Y,f)$ is factorial if and only if $Y$ is factorial and the element $f$ is prime.
\end{corollary}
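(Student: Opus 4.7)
The plan is to derive the corollary directly from the exact sequence in Proposition~\ref{seq}, combined with the standard equivalence that, for a normal Noetherian domain $R$, the ring $R$ is a unique factorization domain if and only if its divisor class group $\Cl(R)$ vanishes. The variety $Y$ is normal by hypothesis and $X=\Susp(Y,f)$ is normal by Proposition~\ref{norm}, so this equivalence applies to both $\KK[Y]$ and $\KK[X]$.

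First I would translate primality of $f$ into divisorial data on $Y$ under the assumption that $Y$ is factorial. Factoring $f=c\cdot p_1^{b_1}\cdots p_k^{b_k}$ into a unit and distinct irreducibles yields $\div_Y(f)=\sum_{i=1}^{k} b_i\,\div_Y(p_i)$, and in a UFD the divisors $\div_Y(p_i)$ are distinct prime Weil divisors. Hence, with the notation $\div_Y(f)=a_1E_1+\ldots+a_sE_s$ fixed before Proposition~\ref{seq}, the element $f$ is prime in $\KK[Y]$ precisely when $s=1$ and $a_1=1$.

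Next I would read off the conclusion from the exact sequence
$$
0 \to \ZZ \xrightarrow{\xi} \ZZ^s \xrightarrow{\psi} \Cl(X) \xrightarrow{\phi'} \Cl(Y) \to 0, \qquad \xi(1)=(a_1,\ldots,a_s).
$$
Suppose $X$ is factorial, that is $\Cl(X)=0$. Then surjectivity of $\phi'$ gives $\Cl(Y)=0$, so $Y$ is factorial; moreover the vanishing of $\psi$ forces the cyclic image $\Im\xi=\ZZ\cdot(a_1,\ldots,a_s)$ to coincide with $\ZZ^s$, which is only possible when $s=1$ and $a_1=1$, whence $f$ is prime by the previous paragraph. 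Conversely, if $Y$ is factorial and $f$ is prime, then $\Cl(Y)=0$ and $s=1$, $a_1=1$, so $\xi$ is an isomorphism and exactness forces $\Im\psi=0$ and hence $\Cl(X)=0$, meaning $X$ is factorial.

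The argument is a straightforward reading of Proposition~\ref{seq}; the only subtle step is the elementary group-theoretic observation that a cyclic subgroup of $\ZZ^s$ coincides with $\ZZ^s$ only when $s=1$ and its generator is $\pm 1$. I anticipate no substantial obstacle.
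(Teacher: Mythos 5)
Your proposal is correct and follows essentially the same route as the paper: the paper's proof likewise reads the factoriality criterion directly off the exact sequence of Proposition~\ref{seq}, observing that $\Cl(X)=0$ holds if and only if $\Cl(Y)=0$ and $\xi$ is an isomorphism, with the latter equivalent to $f$ being prime. Your extra explanation of why ``$\xi$ is an isomorphism'' translates to ``$s=1$ and $a_1=1$'', hence to primality of $f$, is a correct elaboration of a step the paper leaves implicit.
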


\begin{proof}
By Proposition~\ref{seq}, we have $\Cl(X)=0$ if and only if $\Cl(Y)=0$ and the map ${\xi\colon \ZZ\to\ZZ^s}$ is an isomorphism. The first condition means that $Y$ is factorial, while the second means that $f$ is prime.
\end{proof}

The next corollary generalizes~\cite[Theorem~4(a)]{AZa}.

\begin{corollary} \label{pdcg}
Let $Y$ be an affine factorial variety. Consider a function $f\in\KK[Y]$ with ${f=p_1^{a_1}\ldots p_s^{a_s}}$, where $p_i$ are pairwise distinct primes  in $\KK[Y]$ and $a_i\in\ZZ_{>0}$. Let $X =$ \linebreak
$= \Susp(Y,f)$. Then $\Cl(X) \cong \ZZ^s/\langle \omega \rangle$, where $\omega=(a_1,\ldots,a_s)$.
\end{corollary}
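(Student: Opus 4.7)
The plan is to apply Proposition~\ref{seq} and read off $\Cl(X)$ from the resulting four-term exact sequence. Since $Y$ is factorial, $\Cl(Y) = 0$, and Proposition~\ref{seq} collapses to a short exact sequence
$$
0 \longrightarrow \ZZ \xrightarrow{\ \xi\ } \ZZ^s \xrightarrow{\ \psi\ } \Cl(X) \longrightarrow 0,
$$
so $\psi$ induces an isomorphism $\Cl(X) \cong \ZZ^s/\Im\xi$. It remains to compute the image of~$\xi$.

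To do this I would identify the input data feeding into $\xi$ using the factorization of $f$. Since $\KK[Y]$ is a UFD, the prime divisors on $Y$ contained in $\bb{V}_Y(f)$ are exactly the vanishing loci $E_i := \bb{V}_Y(p_i)$ of the distinct prime factors of $f$, and the standard computation $\nu_{E_i}(p_j) = \delta_{ij}$ in a UFD yields
$$
\div_Y(f) = a_1 E_1 + \ldots + a_s E_s.
$$
This matches the normalization of Proposition~\ref{seq}, so $\xi \colon \ZZ \to \ZZ^s$ is the map $1 \mapsto (a_1,\ldots,a_s) = \omega$. Hence $\Im \xi = \langle \omega \rangle$ and $\Cl(X) \cong \ZZ^s/\langle \omega \rangle$, as claimed.

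The heavy lifting is already packaged into Proposition~\ref{seq} and Lemma~\ref{PDu.le}, so there is no real obstacle; what remains is purely the bookkeeping of reading off the $E_i$ and the exponents $a_i$ from the prime factorization of $f$. The only point deserving a brief comment is the degenerate case $s = 0$, in which $f$ is a unit, $X \cong \AA^1_* \times Y$, and $\Cl(X) = 0$, matching the convention $\ZZ^0/\langle \omega\rangle = 0$.
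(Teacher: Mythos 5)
Your proposal is correct and follows exactly the route the paper intends: the corollary is stated as an immediate consequence of Proposition~\ref{seq} (the paper gives no separate proof), and your identification of the $E_i$ with $\bb{V}_Y(p_i)$ and of $\Im\xi$ with $\langle\omega\rangle$ supplies precisely the bookkeeping that is left implicit. Note only that the degenerate case $s=0$ does not arise here, since the section assumes $f$ is non-constant (hence non-invertible on the affine factorial $Y$), so $s\ge 1$.
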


The last corollary is straightforward.

\begin{corollary}
In the notation of Corollary~\ref{pdcg},  the group $\Cl(X)$ has no torsion if and only if the vector $\omega$ is primitive or, equivalently, the element $f$ is not a proper power.
\end{corollary}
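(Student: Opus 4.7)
The plan is to prove the two equivalences separately: first that $\omega$ primitive is equivalent to $f$ not being a proper power, and then that torsion-freeness of $\ZZ^s/\langle\omega\rangle$ is equivalent to primitivity of $\omega$. Both parts are elementary, and the corollary then follows from Corollary~\ref{pdcg} by combining them.

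For the first equivalence, I would argue as follows. Since $Y$ is factorial, so is $\KK[Y]$, and the prime factorization of any element is unique up to units. Suppose $f = g^d$ for some $g \in \KK[Y]$ and $d \in \ZZ_{>1}$. Writing $g = c \, q_1^{b_1} \cdots q_t^{b_t}$ with $c \in \KK[Y]^{\times}$ and $q_j$ pairwise non-associated primes, unique factorization forces (after matching up primes with the $p_i$) each $a_i = d \, b_i$, so $d$ divides $\gcd(a_1,\ldots,a_s)$ and $\omega$ is not primitive. Conversely, if $d := \gcd(a_1,\ldots,a_s) > 1$, then $f = (p_1^{a_1/d}\cdots p_s^{a_s/d})^d$ is a proper $d$-th power. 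So $\omega$ primitive is equivalent to $f$ not being a proper power.

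For the second equivalence, suppose first that $\omega$ is not primitive, say $\omega = d\, \omega'$ with $d > 1$ and $\omega' \in \ZZ^s$. Then in $\ZZ^s/\langle\omega\rangle$ the class $[\omega']$ satisfies $d\,[\omega'] = 0$, but $[\omega'] \ne 0$ since otherwise $\omega' \in \ZZ\omega = \ZZ\, d\omega'$, forcing $\omega' = 0$ and contradicting $\omega \ne 0$. Hence the quotient has nontrivial $d$-torsion. Conversely, if $\omega$ is primitive, then by a standard lattice fact (or Smith normal form applied to the $1\times s$ matrix $\omega$) one can complete $\omega$ to a $\ZZ$-basis of $\ZZ^s$, whence $\ZZ^s/\langle\omega\rangle \cong \ZZ^{s-1}$ is torsion-free.

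Combining these two equivalences with the identification $\Cl(X)\cong \ZZ^s/\langle\omega\rangle$ from Corollary~\ref{pdcg} yields the claim. There is no real obstacle here; the result is purely a property of the cokernel of the map $\xi$ from Proposition~\ref{seq}, interpreted through the factorization of~$f$.
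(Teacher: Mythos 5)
Your proof is correct and is precisely the ``straightforward'' argument the paper has in mind (the paper omits it entirely, remarking only that the corollary is immediate): translate primitivity of $\omega$ into $\gcd(a_1,\dots,a_s)=1$, match that with $f$ not being a proper power via unique factorization, and note that $\ZZ^s/\langle\omega\rangle$ is torsion-free exactly when $\omega$ is primitive. Nothing further is needed.
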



\section{A remark on suspensions}
\label{sec5}


Consider a suspension $X=\Susp(Y,f)$ over a normal affine variety $Y$ corresponding to a non-constant regular function $f$. Recall that $X$ is given in the direct product $\AA^2_{u,v}\times Y$ by the equation $uv=f$.  Denote by $\GG_m$ the multiplicative group of the ground field $\KK$.

The variety $X$ carries a $\GG_m$-action given by
$$
t\cdot (u, v, y)=(tu, t^{-1}v, y).
$$
The algebra of regular invariants $\KK[X]^{\GG_m}$ coincides with the subalgebra $\KK[Y]\subseteq\KK[X]$ and the quotient morphism
$$
\pi\colon X\to X/\!/\GG_m:=\Spec \KK[X]^{\GG_m}=Y
$$
is the projection $(u,v,y)\mapsto y$. If $D$ is the principal divisor $\divv_Y(f)$ on $Y$, then the fiber $\pi^{-1}(y)$ over $y\in Y\setminus \Supp(D)$ is a hyperbola $uv=c$, $c\ne 0$,  that is a closed $\GG_m$-orbit on $X$. In turn, the fiber $\pi^{-1}(y)$ over $y\in\Supp(D)$ is the union $uv=0$ of two lines that form three $\GG_m$-orbits.

\smallskip

One may expect that this way we obtain a characterization of suspensions. Namely, let $X$ be an affine variety with a $\GG_m$-action and $\pi\colon X\to Y:=\Spec\KK[X]^{\GG_m}$ be the quotient morphism. Assume that there is a principal divisor $D=\divv_Y(f)$ on $Y$ such that the fiber $\pi^{-1}(y)$ with $y\in Y\setminus \Supp(D)$ is one $\GG_m$-orbit and $\pi^{-1}(y)$ with $y\in\Supp(D)$ is a union of two intersecting lines. Then $X$ is the suspension $\Susp(Y,f)$ over $Y$.

Let us show that this is not the case. Namely, let us consider the affine 3-fold $X$ given as
$$
\bb{V}(u_1v-y_1y_2, \ u_2v-(y_1+1)y_2, \ u_1(y_1+1)-u_2y_1)\subseteq\AA^5
$$
with the $\GG_m$-action
$$
t\cdot (u_1, u_2, v, y_1, y_2)=(tu_1, tu_2, t^{-1}v, y_1, y_2).
$$
The quotient morphism $\pi$ sends the point $(u_1,u_2,v,y_1,y_2)$ to $(y_1,y_2)$. Hence $Y=\AA^2$ and with the divisor $D=\divv_Y(y_2)$ we have the desired fiber structure. At the same time, the $\GG_m$-action defines a $\ZZ$-grading on the algebra $A:=\KK[X]$ and the homogeneous component
$$
A_1=u_1\KK[Y]+u_2\KK[Y]
$$
is not a cyclic $\KK[Y]$-module. This proves that this $\GG_m$-action on $X$ cannot be induced by a suspension.



\end{document}